\newcommand{\T}{\mathcal{T}(S)}
\newcommand{\Tg}{\mathcal{T}(S)}
\newcommand{\M}{\mathcal{\mathrm{Mod}}(S)}
\newtheorem{theorem}{\rm\bf Theorem}[section]
\newtheorem{proposition}[theorem]{\rm\bf Proposition}
\newtheorem{lemma}[theorem]{\rm\bf Lemma}
\newtheorem{corollary}[theorem]{\rm\bf Corollary}
\theoremstyle{definition}
\newtheorem{definition}[theorem]{\rm\bf Definition}
\theoremstyle{remark}
\begin{document}

\title[Thurston metric] {Thurston's metric on Teichm\"uller space and the translation lengths of mapping classes}
\author{A. Papadopoulos}
\address{Athanase Papadopoulos,  Institut de Recherche Math\'ematique Avanc\'ee, Universit{\'e} de Strasbourg and CNRS,
7 rue Ren\'e Descartes,
 67084 Strasbourg Cedex, France}
  \email{athanase.papadopoulos@math.unistra.fr}
\date{\today}

\author{W. Su}
\address{Weixu Su, Department of Mathematics, Fudan University, 200433, Shanghai, P. R. China}
\email{suwx@fudan.edu.cn}

\date{\today}

\begin{abstract} We show that the Teichm\"uller space of a surface without boundary and with punctures, equipped with Thurston's metric is the limit (in an appropriate sense) of  Teichm\"uller spaces of surfaces with boundary, equipped with their arc metrics, when the boundary lengths tend to zero. We use this to obtain a result on the translation distances for mapping classes for their actions on Teichm\"uller spaces equipped with their arc metrics.

\medskip

 \medskip

\noindent AMS Mathematics Subject Classification:   32G15 ;  30F60.

\medskip

\noindent Keywords:  Teichm\"uller space, hyperbolic geometry, quasiconformal mapping, Thurston metric, arc metric.
\end{abstract}

\maketitle

\section{Introduction}\label{sec:intro}
In this paper, we show that the arc metric on the Teichm\"uller space of surfaces with boundary limits to the Thurston metric on the Teichm\"uller space of surfaces without boundary, by making the boundary lengths tend to zero. We use this to prove a result on the translation distances for mapping classes.

We introduce some notation before stating precisely the results.

In all this paper, $S=S_{g,p,n}$ is a connected orientable surface of finite type, of genus $g$ with $p$
punctures and  $n$  boundary components.
We assume that $S$ has negative Euler characteristic, i.e., $\chi(S)=2-2g-p-n<0$.
When $n>0$, we denote by $\partial S$ the boundary of $S$.

A \emph{hyperbolic structure} on $S$ is a complete metric of constant curvature $-1$ such that
\begin{enumerate}[(i)]
\item each puncture has a neighborhood isometric to a cusp, i.e., to
the quotient $$\big\{z=x+iy \in\mathbb{H}^{2}\  | \ y>a \big\}/<z\mapsto z+1>,$$ for some $a>0$.
\item each boundary component is a simple closed geodesic.
\end{enumerate}
We denote by $\T$ the Teichm\"uller space of $S$, that is, the set of homotopy classes of hyperbolic structures on this surface.

We say that a simple closed curve on $S$  is \emph{essential} if it is neither homotopic to a point
nor to a puncture (but it can be homotopic to a boundary component).
Let $\mathcal{S}$ be the set of homotopy classes of essential simple closed curves
on $S$.

An \emph{arc} on $S$ is the homeomorphic image of a closed  interval which is  properly embedded in $S$, that is, the interior of the arc is in the interior of $S$ and the endpoints
of the arc are on the boundary of $S$. All homotopies of arcs that we consider are
relative to $\partial S$, that is, they keep the endpoints of the arc on the boundary $\partial S$ (but they do not
necessarily fix pointwise the points of $\partial S$). An arc is said to be \emph{essential}
if it is not homotopic to an arc whose image is contained in $\partial S$.
Let $\mathcal{A}$ be the set of homotopy classes of essential arcs on $S$.

Assume that $S$ is equipped with a hyperbolic structure $X$. For any $\gamma\in \mathcal{A}\cup \mathcal{S}$,
there is a unique geodesic $\gamma^X$ in its homotopy class. It is orthogonal to $\partial X$ at each endpoint, in the case where
$\gamma$ is an equivalence class of arc. We denote by
$\ell_\gamma(X)$ the length of
$\gamma^X$, and we call it the \emph{geodesic length} of $\gamma$ on $X$. This geodesic length only depends on
the equivalence class of $X$ in $\T$. Therefore it is a function defined on  $\T$.

There is an asymmetric metric, the \emph{arc metric}, on $\T$ defined by

\begin{equation} \label{eq:arc}
d(X,Y)= \sup_{\gamma\in \mathcal{A}\cup \mathcal{S}} \log \frac{\ell_\gamma(Y)}{\ell_\gamma(X)}.
\end{equation}
This metric was introduced in \cite{LPST}. It is an analogue for surfaces with boundary of the Thurston (asymmetric) metric \cite{Thurston}. The arc metric is also studied in  the papers  \cite{ALPS} \cite{LPST1}  \cite{2009j} \cite{2009h}.

By gluing a hyperbolic surface with its mirror image along the boundary components,
we obtain a natural isometric embedding of the Teichm\"uller space $\T$ into $\mathcal{T}(\widetilde{S})$, where $\widetilde{S}$ denotes the double of $S$. We endow $\mathcal{T}(\widetilde{S})$ with the Thurston metric.
The authors do not know whether such an embedding is totally geodesic (although this is unlikely).
In a recent paper \cite{ALPS}, the authors described how such an embedding extends continuously to the Thurston compactification.
Inspired by the work of Walsh \cite{Walsh}, the authors proved in \cite{ALPS} that the horofunction
boundary of $\left(\T,d\right)$ is homeomorphic to the Thurston boundary. It is conjectured in \cite{ALPS} that, without the usual exceptional cases, the isometry group of $\left(\T,d\right)$
is the extended mapping class group.

\subsection{Convergence of the arc metric}
The first aim of this paper is to study the subsets of $\T$ corresponding to hyperbolic surfaces whose boundary components have fixed lengths. These subsets are intersections of level subsets of the length functions of boundary components.

Let $\mathcal{B}=\{\beta_1,\cdots,\beta_n\}$ be the set of
 boundary components of $S$. Given any $L=(L_1, \cdots, L_n), L_i>0$, we set

$$\mathcal{T}_L(S)=\big\{X\in \T \ | \  \ell_{\beta_i}(X)=L_i, 1\leq i\leq n \big\}.$$

For a fixed vector $L=(L_1, \cdots, L_n)$,
there is a metric on $\mathcal{T}_L(S)$,
defined using the same formula  (\ref{eq:arc}).
This is  the arc metric on $\mathcal{T}_L(S)$.
It is also the metric induced by the arc metric on $\T$ on $\mathcal{T}_L(S)$ considered as a subset of $\T$.

When $L=0$, we define $\mathcal{T}_0(S)$ be the Teichm\"uller space of hyperbolic structures on $S$ such that
each boundary component is replaced by a puncture. We prove that
as $L\to 0$, the restriction of the arc metric on $\mathcal{T}_L(S)$ limits to the Thurston metric on
$\mathcal{T}_0(S)$.
This limiting behaviour is described using Fenchel-Nielsen coordinates. It is also expressed in Proposition \ref{lem:Lip} below where we prove that there is a certain map
from $\Psi_L: \mathcal{T}_L(S)$ to $\mathcal{T}_0(S) $ which is a $\big(1,o(1)\big)$-quasi-isometry.

\subsection{Application to the mapping class group}
In the second part of this paper we use the results of the first part to study the translation distances of mapping class group elements on Teichm\"uller space equipped with the Thurston metric.
This is based on Proposition \ref{lem:Lip}. Let us be more precise.

In this section, the surface is without boundary, that is, $S$ is an oriented surface obtained from a closed surface by removing finitely many points (possibly
none). We denote its mapping class group
 by $\M$.
 For an element $f\in \M$, its \emph{translation distance} with respect to the Thurston metric $d_{\mathrm{Th}}$ on $\T$ is defined by
 $$\sigma(f)= \inf_{X\in \T} d_{\mathrm{Th}}\big( X, f(X)\big).$$

The \emph{maximal dilatation} of $f$ is the largest of the dilatation constants of its pseudo-Anosov components, with respect to the Thurston decomposition of this mapping class. See \cite{FLP}.

 We show that

 \begin{theorem}\label{thm:dilatation}
Let $f\in \M$ and $\lambda(f)$ be the maximal dilatation of $f$. Then there is an integer $n$ such that
\begin{equation}\label{eq:translation}
\sigma(f^n)=\log \lambda(f^n).
\end{equation}
\end{theorem}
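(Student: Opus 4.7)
The plan is to establish both inequalities in \eqref{eq:translation}. Fix $n$ as in the Nielsen--Thurston decomposition of $f$: $n$ is large enough that $f^n$ preserves each curve in the canonical reducing multicurve $\mathcal{C}=\{c_1,\ldots,c_k\}$ of $f$ and each component $S_i$ of $S\setminus\mathcal{C}$, and acts on each $S_i$ either as the identity isotopy class or as a pseudo-Anosov with dilatation $\lambda_i$. Then $\lambda(f^n)=\max_i \lambda_i$.

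\emph{The lower bound} $\sigma(f^n)\geq \log\lambda(f^n)$ holds for every $X\in\mathcal{T}(S)$. Using Thurston's length-ratio formula
$$d_{\mathrm{Th}}(X,Y)=\sup_{\mu}\log\frac{\ell_Y(\mu)}{\ell_X(\mu)}$$
(supremum over nonzero measured laminations), and the fact that the stable lamination $\mu_i^s$ of the pseudo-Anosov restriction on $S_i$, viewed as a measured lamination on $S$, satisfies $(f^n)^{-1}_*\mu_i^s=\lambda_i\mu_i^s$, one gets $\ell_{f^n X}(\mu_i^s)=\lambda_i\,\ell_X(\mu_i^s)$. Taking the maximum over $i$ yields $d_{\mathrm{Th}}(X,f^nX)\geq \log\lambda(f^n)$ for every $X$.

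\emph{The upper bound} is trivial when $f^n$ is periodic and follows from Thurston's stretch-line construction when $f$ is pseudo-Anosov; the substantive case is $f$ reducible, and this is where Proposition \ref{lem:Lip} enters. On each pseudo-Anosov piece $S_i$, choose $Y_i\in\mathcal{T}_0(S_i)$ on the stretch-line axis of $f^n|_{S_i}$, so $d_{\mathrm{Th}}(Y_i,f^nY_i)=\log\lambda_i$; on identity components pick any $f^n$-fixed structure. For each small $L=(L_1,\ldots,L_k)$ use Proposition \ref{lem:Lip} to choose $X_i^L\in\mathcal{T}_{L}(S_i)$ with
$$d_{\mathcal{A}}(X_i^L,f^nX_i^L)=\log\lambda_i+o(1)\quad\text{as }L\to 0,$$
then glue the $X_i^L$ along $\mathcal{C}$ with boundary lengths $L$ and twist parameters adapted to $f^n$ to obtain $X^L\in\mathcal{T}(S)$. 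Letting $L\to 0$ will produce the desired sequence of near-optimizers.

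The crux and main obstacle is the gluing estimate
$$d_{\mathrm{Th}}(X^L,f^nX^L)\leq \max_i d_{\mathcal{A}}(X_i^L,f^nX_i^L)+o(1)\qquad(L\to 0).$$
One evaluates Thurston's supremum curve by curve: an essential simple closed curve $\gamma$ on $S$ is either one of the $c_j$ (length preserved by $f^n$), or is contained in a single piece $S_i$ (controlled by the subsurface arc metric), or else crosses $\mathcal{C}$ and decomposes into arcs in the $S_i$'s joined by segments crossing annular neighbourhoods of $\mathcal{C}$. In the last case, as $L\to 0$ the annular pieces become cusp-like and the ratio $\ell_{f^nX^L}(\gamma)/\ell_{X^L}(\gamma)$ is dominated, up to $o(1)$, by the arc-length ratios in the individual pieces where $f^n$ acts nontrivially; this is the same pinching mechanism used in Proposition \ref{lem:Lip}. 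Combining the gluing estimate with the equality on pieces gives $d_{\mathrm{Th}}(X^L,f^nX^L)\to \log\lambda(f^n)$, matching the lower bound and completing the proof.
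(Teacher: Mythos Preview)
Your outline matches the paper's strategy: the lower bound via the stable lamination, the upper bound by pinching the reducing system and invoking Proposition~\ref{lem:Lip} on each piece, and a gluing estimate for curves crossing $\mathcal{C}$. Two points, however, are not quite right as stated.

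First, your choice of $n$ is too small. Having $f^n$ restrict to a pseudo-Anosov on each $S_i$ does \emph{not} guarantee a Thurston stretch-line axis for $f^n|_{S_i}$: the stretch line through $(\overline{\mu^u_i},\mu^s_i)$ is $f^n$-invariant only once $f^n$ also fixes the particular completion $\overline{\mu^u_i}$ of the unstable lamination to a maximal lamination. Since there are finitely many such completions, some further power does this, and that is the $n$ one must take. The paper is explicit about this (see \S\ref{sec:pA} and the remark following Theorem~\ref{thm:dilatation}, which records precisely this gap in an earlier argument). Without enlarging $n$, the equality $d_{\mathrm{Th}}(Y_i,f^nY_i)=\log\lambda_i$ is unjustified.

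Second, the crossing-curve part of the gluing estimate is not ``the same pinching mechanism used in Proposition~\ref{lem:Lip}.'' In Proposition~\ref{lem:Lip} the arcs meet the boundary orthogonally and a pair-of-pants trigonometric identity suffices. For a simple closed geodesic $\gamma$ crossing $\mathcal{C}$, the segments $\gamma_j$ are \emph{not} orthogonal to $\widehat{\mathcal{C}}$; one has $L_j=\ell_j+a+b+O(1)$ with extra ``overshoot'' terms $a,b$ (the hyperbolic quadrilateral formula), and one must show these terms change only by $O(1)$ under $f^n$. The paper does this via a separate lemma (Lemma~\ref{lem:key}), whose proof uses that $f$ is uniformly $K$-quasiconformal from $X_L$ to $f(X_L)$ independently of $L$ (Lemma~\ref{lem:1}) and a cross-ratio distortion argument in the universal cover. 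Your sentence about ``twist parameters adapted to $f^n$'' does not dispose of this: $f^n$ may involve fixed powers of Dehn twists about the $c_j$, and it is the uniform quasiconformal control, not a clever choice of twist, that absorbs their effect into the $o(1)$.
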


Note that we always have $\lambda(f^n)=n\lambda(f)$. We recall that Bers proved in \cite{Bers} that
$$\lambda(f)= \inf_{X\in \T} d_{\mathrm{Teich}}\big(X, f(X)\big),$$
where $d_{\mathrm{Teich}}$ denotes the Teichm\"uller metric.

Recall that from Thurston's classification, the mapping class $f$ is either periodic, reducible or pseudo-Anosov.
When $f$ is periodic, it has a fixed point in $\Tg$, thus $\sigma(f)=0$ and \eqref{eq:translation} is trivial.
Theorem \ref{thm:dilatation} was proved in  \cite{LPST2} when $f$ is pseudo-Anosov.
 In the present paper, we prove Theorem \ref{thm:dilatation} by showing that when $f$ is reducible, one can decrease the distance
$d_{\mathrm{Th}}\big(X, f(X)\big)$  (not necessary strictly) by taking a sequence of $X\in \Tg$ such that the lengths of the reducible curves of $f$
are shorten to zero.

\remark{In the paper \cite{LPST2}, the authors announced a proof that $\sigma(f)=\log \lambda(f)$ for any pseudo-Anosov mapping class $f$.
Unfortunately, there is a gap  in the argument. In fact, the (weaker) result which is proved there is that this holds up to taking a power of $f$. This is also a particular case of Theorem \ref{thm:dilatation} of the present paper.}

\medskip

The difficulty to understand the translation distance $\sigma(f)$ lies on the fact that the Thurston metric is not uniquely geodesic.
However,
there is another definition of translation distance, in a weak form, by setting:

$$\tau(f)=\lim_{n\to\infty}\frac{1}{n} \  d_{\mathrm{Th}}\left(X_0,f^n(X_0)\right),$$
where $X_0\in \T$ is fixed. It is not hard to check that the definition of $\tau(f)$ is 
independent on the choice of $X_0$. As a corollary of Theorem \ref{thm:dilatation}, we have

\begin{corollary}
Let $f\in \M$ and $\lambda(f)$ be the maximal dilatation of $f$. Then $\tau(f)=\log \lambda(f)$.
\end{corollary}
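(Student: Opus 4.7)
The plan is to deduce the corollary from Theorem \ref{thm:dilatation} by applying the subadditive (Fekete) lemma to the orbit $\{f^n(X_0)\}$. Fix $X_0\in\T$ and set $a_n := d_{\mathrm{Th}}(X_0,f^n(X_0))$. Because every mapping class acts as an isometry of $(\T,d_{\mathrm{Th}})$, the triangle inequality gives
\begin{equation*}
a_{m+n}\ \leq\ d_{\mathrm{Th}}(X_0,f^m X_0)+d_{\mathrm{Th}}(f^m X_0,f^m(f^n X_0))\ =\ a_m+a_n,
\end{equation*}
so $(a_n)$ is subadditive and Fekete's lemma yields $\tau(f)=\lim_n a_n/n=\inf_n a_n/n$. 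In particular $\tau(f)\leq\sigma(f^n)/n$ for every $n\geq 1$, since the infimum of $a_n(X_0)/n$ over $X_0$ is $\sigma(f^n)/n$.

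For the upper bound, Theorem \ref{thm:dilatation} furnishes an integer $n_0$ with $\sigma(f^{n_0})=\log\lambda(f^{n_0})=n_0\log\lambda(f)$, where the last equality uses the multiplicativity $\log\lambda(f^n)=n\log\lambda(f)$ of the maximal dilatation. Applied at $n=n_0$, the inequality above gives $\tau(f)\leq\log\lambda(f)$.

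For the matching lower bound, I would first establish $\log\lambda(f^k)\leq\sigma(f^k)$ for every $k\geq 1$. Apply Theorem \ref{thm:dilatation} to the mapping class $f^k$ to obtain some $m$ with $\sigma(f^{km})=\log\lambda(f^{km})=m\log\lambda(f^k)$. On the other hand, choosing a near-minimizer $X$ for $\sigma(f^k)$ and iterating the triangle inequality along $X,f^kX,\dots,f^{km}X$ shows $\sigma(f^{km})\leq m\,\sigma(f^k)$. Combining these, $\log\lambda(f^k)\leq\sigma(f^k)$, hence
\begin{equation*}
k\log\lambda(f)\ =\ \log\lambda(f^k)\ \leq\ \sigma(f^k)\ \leq\ a_k(X_0);
\end{equation*}
dividing by $k$ and letting $k\to\infty$ yields $\log\lambda(f)\leq\tau(f)$, and the two bounds match.

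I do not anticipate a serious obstacle: the difficult content has already been packaged into Theorem \ref{thm:dilatation}, and what remains is bookkeeping with the subadditive lemma, the fact that $\M$ acts by isometries on $(\T,d_{\mathrm{Th}})$, and the multiplicativity $\log\lambda(f^n)=n\log\lambda(f)$. The only mild subtlety is that $d_{\mathrm{Th}}$ is asymmetric, but the triangle inequality and $\M$-invariance needed above both survive without change.
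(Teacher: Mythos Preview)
Your argument is correct. The paper does not spell out a proof of the corollary---it simply records it as an immediate consequence of Theorem~\ref{thm:dilatation}---so what you have written is the natural elaboration: subadditivity of $n\mapsto d_{\mathrm{Th}}(X_0,f^nX_0)$ via the triangle inequality and the isometric action of $\M$, Fekete's lemma, base-point independence of $\tau(f)$, and then the two bounds from Theorem~\ref{thm:dilatation}.

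One small remark on economy: for the lower bound you rederive $\sigma(f^k)\geq\log\lambda(f^k)$ by applying Theorem~\ref{thm:dilatation} to $f^k$ and using subadditivity of $\sigma$ under powers. That is perfectly valid, but the paper already obtains $\sigma(f^n)\geq n\log\lambda(f)$ for all $n$ directly from the definition of the Thurston metric (by approximating the unstable lamination of a maximal pseudo-Anosov piece by simple closed curves; see the proof of Theorem~\ref{thm:dilatation} and \S\ref{sec:pA}). Using that, one gets $a_n(X_0)\geq\sigma(f^n)\geq n\log\lambda(f)$ immediately, which shortens your lower-bound step. Either route is fine.
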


\noindent {\bf Acknowledgements.} The authors are partially supported by the French ANR grant FINSLER (G\'eom\'etrie de Finsler et applications, ANR-12-BS01-0009). W. Su is paritially supported by NSFC No: 11201078.

\section{Convergence of the arc metric under pinching}\label{sec:convergence}
In this section, $S=S_{g,p,n}$ is again a connected orientable surface of genus $g$ with $p$
punctures and  $n$  boundary components, and $\mathcal{B}=\{\beta_1,\cdots, \beta_n\}$ is the set of
 boundary components of $S$. Let $\mathcal{C}=\{\gamma_1,\cdots, \gamma_{3g-3+p}\}$ be a maximal collection of distinct homotopy classes of disjoint essential simple closed curves in the interior of $S$.
 The union $\mathcal{C}\cup \mathcal{B}$ is a pants decomposition of $S$. We denote the corresponding
 Fenchel-Nielsen (length-twist) coordinates on $\T$ by

 $$\big( (\ell_1,\tau_1), \cdots, (\ell_{3g-3+p},\tau_{3g-3+p})\big) \times \large( \ell_{3g-3+p+1},\cdots, \ell_{3g-3+p+n}\large).$$

Let us fix $L=\large( \ell_{3g-3+p+1},\cdots, \ell_{3g-3+p+n}\large)$. We define a map

\begin{eqnarray*}
\Psi_L: \mathcal{T}_L(S) &\to& T_0(S) \\
\big( (\ell_1,\tau_1), \cdots, (\ell_{3g-3+p},\tau_{3g-3+p})\big) \times L &\mapsto& \big( (\ell_1,\tau_1), \cdots, (\ell_{3g-3+p},\tau_{3g-3+p})\big)
\end{eqnarray*}
It is clear that $\Psi_L$ is a homeomorphism.

\begin{definition}\label{conv}
We say that a family of hyperbolic surfaces $X_L\in \mathcal{T}_L(S)$ converges to $X_0\in \mathcal{T}_0(S)$,
and we denoted this relation by $X_L\to X_0$,
if $\Psi_L(X_L)\to X$ as $L\to 0$.

\end{definition}
The definition is independent of the choice of the pants decomposition.

We shall use the following convergence criterion proved by Mondello; cf. \cite[Theorem 7.1]{Mondello} for a more general statement. In this statement, $g_L$ denotes the hyperbolic metric of $X_L\in \mathcal{T}_L(S)$.
\begin{lemma}\label{lem:criterion}
$X_L\to X_0$ if and only if there exist homeomorphisms $f_L: X_0\to X_L$ such that $(f_L)^\ast(g_L)\to g_0$ uniformly on
the compact subsets of $X_0$.
\end{lemma}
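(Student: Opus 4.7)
The plan is to reduce the statement to the geometry of individual pairs of pants in the decomposition $\mathcal{C}\cup\mathcal{B}$, using the fact that a hyperbolic pair of pants is determined up to isometry by its three boundary lengths, where a boundary of length zero is interpreted as a cusp.

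For the forward implication ($X_L\to X_0$ implies existence of the $f_L$), I would first use that convergence in Fenchel--Nielsen coordinates means the internal lengths $\ell_i$ and twists $\tau_i$ converge while $L\to 0$. I would then build $f_L$ pants-by-pants: on each pair of pants $P$ of the decomposition in $X_0$, construct a diffeomorphism onto the corresponding pair of pants $P_L$ in $X_L$ which approaches an isometry uniformly on compact subsets of $P$. This relies on the continuity of the hyperbolic structure on a pair of pants as a function of its three boundary lengths, including the degenerate case where a boundary length tends to $0$ and the boundary becomes a cusp. Near a cusp of $X_0$, one models the corresponding region of $X_L$ in Fermi coordinates around the short geodesic boundary of length $L_j$, and writes down an explicit diffeomorphism to the cusp chart $\{y>a\}/\langle z\mapsto z+1\rangle$; one then checks that the metric discrepancy tends to $0$ outside a horocyclic neighborhood that exits every compact subset as $L_j\to 0$. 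Finally, one glues the pants-wise diffeomorphisms along the curves $\gamma_i$, implementing the twist shift $\tau_i(X_L)-\tau_i(X_0)$ in a collar of $\gamma_i$, to obtain a global homeomorphism $f_L:X_0\to X_L$. Uniform convergence $(f_L)^*(g_L)\to g_0$ on a compact $K\subset X_0$ then follows because $K$ meets only finitely many pants and stays at positive distance from all cusps.

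For the reverse implication, assume homeomorphisms $f_L:X_0\to X_L$ with $(f_L)^*(g_L)\to g_0$ uniformly on compact subsets. Each geodesic representative $\gamma_i^{X_0}$ lies in a fixed compact $K\subset X_0$. The uniform convergence of metrics on a suitably enlarged $K$ (which still contains the straightened geodesics in $X_L$ for $L$ small, by the collar lemma) implies that $\ell_{\gamma_i}(X_L)\to \ell_{\gamma_i}(X_0)$: the pushforward $f_L(\gamma_i^{X_0})$ is a curve of length $\ell_{\gamma_i}(X_0)+o(1)$ in $X_L$, and its geodesic representative is a short deformation of it, producing the same limit. The twist parameters $\tau_i$ can be read off, up to a universal normalization, from signed distances between feet of perpendiculars dropped from fixed transverse arcs to $\gamma_i$, a geometric quantity that is continuous in $g_L$ restricted to any compact neighborhood of $\gamma_i$. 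Hence the coordinates converge and $\Psi_L(X_L)\to X_0$.

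The main obstacle is the forward direction near the degenerating boundaries, namely constructing a comparison between a neighborhood of a geodesic boundary of length $L_j$ and a cusp neighborhood that becomes uniformly almost-isometric on any prescribed compact subset as $L_j\to 0$. The key quantitative tool is that the collar of a geodesic of length $L_j$ has width $\sim \log(1/L_j)\to\infty$, and Fermi coordinates on this collar match, after an explicit change of variable, the horocyclic coordinates on a cusp, with error in the metric tensor going to $0$ uniformly on any compact piece; once this local comparison is in hand, the global construction is a routine gluing using partitions of unity and the convergence of twist parameters.
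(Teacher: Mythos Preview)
The paper does not actually prove this lemma: it cites it as a special case of \cite[Theorem~7.1]{Mondello} and uses it as a black box. So there is no ``paper's own proof'' to compare against.

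Your proposal is a reasonable direct argument and is essentially the standard way one proves this type of convergence criterion by hand. The forward direction is sound: the key analytic input is exactly what you identify, namely that in Fermi coordinates about a boundary geodesic of length $L_j$ the metric is $d\rho^2+\cosh^2(\rho)\,L_j^2\,d\theta^2$, and after the change of variable $y=e^{-\rho}/L_j$ this converges uniformly on compact sets (i.e., for $y$ bounded below) to the cusp metric $(dx^2+dy^2)/y^2$ as $L_j\to 0$. The gluing along the interior curves using the twist data is routine. The reverse direction is also correct in outline; the only place to be a bit more careful is the twist parameter, where you should fix once and for all the seam arcs used to define $\tau_i$ and then argue that the feet of the perpendiculars from these arcs to the geodesic $\gamma_i$ depend continuously on the metric in a compact neighborhood of $\gamma_i$, which is exactly what your sketch says.

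In short: your write-up is more than the paper itself provides, and the approach is correct. If you want to match the paper literally, you would simply invoke Mondello's criterion and move on.
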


\subsection{The Thurston metric as a limit of the arc metric}
We are only interested in the situation where $L$ is sufficiently small.
We write in this case $L\ll 1$. If a quantity $A\to \infty$ uniformly as $L\to 0$, we write $A\gg 1$.
A quantity $B=B(L)$ satisfying $B\to 0$ as $L\to 0$ is denoted by $o(1)$.

Recall that the Thurston metric on $\mathcal{T}_0(S)$ is defined by
$$d_{\mathrm{Th}}(X,Y)= \sup_{\gamma\in  \mathcal{S}} \log \frac{\ell_\gamma(Y)}{\ell_\gamma(X)}.$$

Using the notation established at the beginning of this section, we prove the following:
\begin{proposition}\label{lem:Lip}
For each $L\ll 1$, the map $\Psi_L: \mathcal{T}_L(S) \to \mathcal{T}_0(S) $ is an $\big(1,o(1)\big)$ quasi-isometry.
\end{proposition}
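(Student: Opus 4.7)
The goal is to exhibit a function $\varepsilon(L) \to 0$ as $L \to 0$ such that, for all $X_L, Y_L \in \mathcal{T}_L(S)$, setting $X_0 = \Psi_L(X_L)$ and $Y_0 = \Psi_L(Y_L)$, one has
$$d_{\mathrm{Th}}(X_0, Y_0) - \varepsilon(L) \leq d(X_L, Y_L) \leq d_{\mathrm{Th}}(X_0, Y_0) + \varepsilon(L).$$
Because $X_L$ and $Y_L$ share the same boundary lengths $L_i$, curves homotopic to a $\beta_i$ contribute $0$ to $d(X_L,Y_L)$, and the competitors in the arc-metric supremum reduce to $\mathcal{S}' \cup \mathcal{A}$, where $\mathcal{S}'$ is in canonical bijection with $\mathcal{S}_0$ on the pinched surface. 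The two inequalities are handled separately.

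For the lower bound, I would restrict the arc-metric supremum to $\gamma \in \mathcal{S}'$. Since the interior Fenchel--Nielsen coordinates of $X_L$ agree with those of $X_0$, and each pair-of-pants trigonometric length identity is real-analytic in the pants boundary lengths, a Taylor expansion at $L=0$ combined with Wolpert-type estimates for $\partial \ell_\gamma/\partial \ell_{\beta_i}$ gives
$$\sup_{\gamma \in \mathcal{S}'} \left| \log \frac{\ell_\gamma(X_L)}{\ell_\gamma(X_0)} \right| = O(|L|),$$
and analogously for $Y$. The point behind the uniformity is that $\partial \ell_\gamma/\partial \ell_{\beta_i}$ is bounded per pair of pants of the fixed decomposition that both contains $\beta_i$ and is crossed by $\gamma$, while $\ell_\gamma(X_0)$ itself grows at least linearly in the total number of pants crossings; the two cancel to give a relative error that is independent of the combinatorial complexity of $\gamma$. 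This immediately yields $d(X_L, Y_L) \geq d_{\mathrm{Th}}(X_0, Y_0) - O(|L|)$.

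For the upper bound, which must also dominate the supremum over arcs, I would invoke Thurston's extremal Lipschitz theorem on the cusped surface: for every $\eta > 0$ there is a homeomorphism $f:X_0 \to Y_0$ with Lipschitz constant at most $K = e^{d_{\mathrm{Th}}(X_0,Y_0)} + \eta$. Extend $f$ to a map $f_L: X_L \to Y_L$ as follows: on the standard Fermi collar neighbourhoods of the boundary components, which are isometric between $X_L$ and $Y_L$ (both are determined by the common $L_i$), take the identity in Fermi coordinates (1-Lipschitz); on the thick parts, conjugate $f$ by the $(1+o(1))$-bi-Lipschitz identifications with the thick parts of $X_0$ and $Y_0$ supplied by Lemma \ref{lem:criterion}; glue across a thin transition annulus. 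The resulting $f_L$ is $(K+o(1))$-Lipschitz, so for every $\gamma \in \mathcal{A} \cup \mathcal{S}$,
$$\ell_\gamma(Y_L) \leq \ell_{f_L(\gamma)}(Y_L) \leq (K + o(1))\, \ell_\gamma(X_L),$$
and taking $\sup_\gamma$ and then $\eta \to 0$ gives $d(X_L, Y_L) \leq d_{\mathrm{Th}}(X_0, Y_0) + o(1)$. Note that this simultaneously handles the arcs, which is critical since the sup defining $d$ ranges over $\mathcal{A}\cup\mathcal{S}$.

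The main obstacle is the construction of $f_L$ across the collar/thick interface without degrading the Lipschitz constant by more than $o(1)$: the rotational ambiguity in matching the collar isometry to the conjugated thick-part map (corresponding to the Fenchel--Nielsen twist parameters along the pants curves adjacent to $\partial S$) must be used to align the two pieces continuously, and the interpolation annulus must be arranged so that the hyperbolic metrics on $X_L$ and $X_0$ are already close to one another there. This technical gluing, together with the uniform simple-closed-curve estimate, is the heart of the proof.
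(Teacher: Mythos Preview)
Your lower bound is essentially the paper's: restrict to interior simple closed curves and use that their lengths vary by a multiplicative $1+o(1)$ under pinching. The paper obtains this from Mondello's convergence criterion (Lemma~\ref{lem:criterion}) rather than Wolpert-type derivative bounds, but the content is the same.

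For the upper bound the two arguments diverge, and yours has a real gap at exactly the point you flag. The paper never builds a Lipschitz map $X_L\to Y_L$. Instead it treats arcs \emph{computationally}: given $\alpha\in\mathcal{A}$ with endpoints on $\beta_1,\beta_2$, the regular neighbourhood of $\alpha\cup\beta_1\cup\beta_2$ is a pair of pants whose third boundary is some $\gamma\in\mathcal{S}$, and the pair-of-pants formula gives
\[
\ell_\alpha(X_L)=\ell_\gamma(X_L)+\bigl|\log\bigl(\sinh\tfrac{\ell_{\beta_1}}{2}\sinh\tfrac{\ell_{\beta_2}}{2}\bigr)\bigr|-\log 2+o(1).
\]
Since the second term depends only on $L$ (hence is identical on $X_L$ and $Y_L$), the elementary inequality $\frac{A_1+A_2}{B_1+B_2}\le\max\{A_1/B_1,A_2/B_2\}$ yields
\[
\frac{\ell_\alpha(Y_L)}{\ell_\alpha(X_L)}\le(1+o(1))\max\Bigl\{\frac{\ell_\gamma(Y_0)}{\ell_\gamma(X_0)},\,1\Bigr\},
\]
so the arc supremum is dominated by the curve supremum up to $o(1)$. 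This sidesteps any global construction.

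Your route via an almost-optimal Lipschitz $f:X_0\to Y_0$ and a glued extension $f_L$ is conceptually appealing, but the interpolation you defer is not a minor technicality. Thurston's optimal map is only controlled metrically, not $C^0$, near the cusps: you know $f$ sends horoballs to roughly horoballs, but you have no control on the angular component of $f$ at the collar boundary, so the mismatch with the ``identity in Fermi coordinates'' can be of order~$1$ in the circle direction. Interpolating an order-$1$ rotational discrepancy across a transition annulus of bounded width costs an order-$1$ Lipschitz constant, not $o(1)$. Making this work would require either a normal form for Thurston maps near cusps or replacing $f$ by a near-optimal map with prescribed cusp behaviour---either of which is substantially harder than the paper's two-line trigonometric bound.
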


More precisely, we have, for any fixed $L$ and for any $X$ and $Y$ in $\mathcal{T}_L(S)$,
\[d(X,Y)-o(1)\leq d_{\mathrm{Th}}(\Psi_L(X),\Psi_L(Y) )\leq d(X,Y)+ o(1) .\]

\begin{proof}
Let $X_0\in \mathcal{T}_0(S)$. We denote by $X_L=\Psi_L^{-1}(X_0)$. It is obvious that $X_L\to X_0$ in the sense of Definition \ref{conv}.

First, we estimate the lengths of simple closed curves $\ell_{\gamma}(X_L)$ in terms of $\ell_0(X_0)$.

For any boundary component $\beta$ of $S$, we denote its unique geodesic representative in $X_L$ by $\beta_L$.
By a well-known result, there is a collar neighborhood of $\beta_L$ of width $w(\beta_L)$ such that:
$$\sinh \frac{\ell_\beta(X_L)}{2} \sinh w(\beta_L)=1.$$
Note that $w(\beta_L)\to \infty$ as $L\to 0$.
It is also well known that any simple geodesic on $X_L$ that does not intersect $\beta_L$ is disjoint
from such a collar \cite{Wolpert}. In particular, any simple closed geodesic in the interior of $X_L$
is disjoint from the collar.

Similarly, we choose a standard cusp neighborhood of each puncture of $X_0$, that is, a neighborhood  isometric to
$$\{\Im z > \frac{1}{2}\}/<z\mapsto z+1>.$$ Any simple closed geodesic on $X_0$ is disjoint from the standard cusp neighborhood.

Denote the union of the standard cusp neighborhoods on $X_0$ by $C$.
Using Lemma \ref{lem:criterion},
it is not hard to see that there exist homeomorphisms $f_L: X_0\to X_L$ such that
\begin{enumerate}[(i)]
\item  $(f_L)^\ast(g_L)\to g_0$ uniformly on $X_0\setminus C$;
\item any closed geodesic in the interior of $X_L$ is contained in $X_L \setminus f_L^{-1}(C)$.
\end{enumerate}
As a result, we have

\begin{equation}\label{eq:1}
\ell_\gamma(X_L)=\big(1+ o(L)\big) \ell_\gamma(X_0), \ \forall \ \gamma\in \mathcal{S}.
\end{equation}

The next step is to estimate the length of an essential geodesic arc on $X_L$.
Consider an arbitrary $\alpha\in \mathcal{A}$.
Suppose that $\alpha$ joins two boundary components $\beta_1, \beta_2$ of $X_L$ (we may have $\beta_1=\beta_2$).
A tubular neighborhood of $\alpha \cup \beta_1 \cup \beta_2$ is a topological pair of pants,  denoted by $\mathcal{P}$.

We fist assume that $\beta_1 \neq \beta_2$. Then the boundary of $\mathcal{P}$ has three connected components, two of them being (homotopic to) $\beta_1$ and $\beta_2$.
We denote by $\gamma$ the (homotopy class of the) third one.

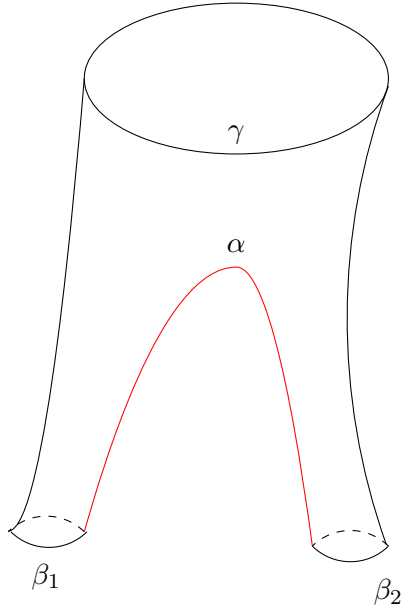
\begin{figure}[!hbp]
\centering
\begin{tikzpicture}
   \draw (0,2) ellipse (2 cm and 1 cm);
   \node[above] at (0,1) {$\gamma$};
   \draw (-3,-4) parabola (-2,2);
   \draw (-2.965,-4.01) arc (225:315: 0.707);
   \node[below] at (-2.5,-4.3) {$\beta_1$};
   \draw[dashed] (-2,-4.01) arc (45: 135: 0.707);
   \draw[red] (-2,-4.01) parabola bend (0,-0.5) (1,-4.2);
   \node[below] (0,-2) {$\alpha$};
   \draw (1,-4.2) arc (225:315: 0.707);
   \draw[dashed] (2,-4.2) arc (45: 135: 0.707);
   \draw (2,1.9) arc (160:199.5:9);
   \node[below] at (2,-4.5) {$\beta_2$};
\end{tikzpicture}
\caption{\small{The regular neighborhood is homotopic to a pair of pants.}}
\label{fig:pants}
\end{figure}

To simplify our notation, we set $\ell_\alpha=\ell_\alpha(X_L)$, etc. We use the following hyperbolic geometry formula:

\begin{equation*} \label{formula:case1}
\cosh\left(\ell_\alpha\right) = \frac{\cosh\left(\tfrac{1}{2}\ell_\gamma\right) + \cosh\left(\tfrac{1}{2}\ell_{\beta_1} \right) \cosh\left(\tfrac{1}{2}\ell_{\beta_2}\right)}{\sinh\left(\tfrac{1}{2}\ell_{\beta_1}\right)\sinh\left(\tfrac{1}{2}\ell_{\beta_2} \right)}.
\end{equation*}

By assumption, $\ell_{\beta_1}, \ell_{\beta_2} \ll 1$. By a direct calculation, we get

\begin{eqnarray}
\ell_{\alpha} &=&  \log\big( \cosh\left(\ell_\alpha\right)+ \sqrt{\cosh^2\left(\ell_\alpha\right)-1} \big)  \nonumber \\
&=& \log\big( \cosh\left(\tfrac{1}{2}\ell_\gamma\right) + \cosh\left(\tfrac{1}{2}\ell_{\beta_1} \right) \cosh\left(\tfrac{1}{2}\ell_{\beta_2}\right)\big) \nonumber \\
&& - \log\big(\sinh\left(\tfrac{1}{2}\ell_{\beta_1}\right)\sinh\left(\tfrac{1}{2}\ell_{\beta_2} \right)\big) +o(1)  \nonumber\\
&=& \ell_\gamma + |\log\big(\sinh\left(\tfrac{1}{2}\ell_{\beta_1}\right)\sinh\left(\tfrac{1}{2}\ell_{\beta_2} \right)\big)|-\log 2 + o(1) \label{eq:2}
\end{eqnarray}

The case where $\beta_1= \beta_2$ can be dealt with in the same way, by using the following formula:

\begin{align*} \label{formula:case1}
\cosh^2\left(\tfrac{1}{2}\ell_\alpha\right) &= \frac{-1 + \cosh^2\left(\tfrac{1}{2}\ell_\beta\right) + \cosh^2\left(\tfrac{1}{2}\ell_{\gamma_1}\right) + \cosh^2\left(\tfrac{1}{2}\ell_{\gamma_2}\right)}{\sinh^2\left(\tfrac{1}{2}\ell_\beta\right)} + \\
&+ \frac{2 \cosh\left(\tfrac{1}{2}\ell_\beta\right) \cosh\left(\tfrac{1}{2}\ell_{\gamma_1}\right) \cosh\left(\tfrac{1}{2}\ell_{\gamma_2}\right)}{\sinh^2\left(\tfrac{1}{2}\ell_\beta\right)}   \nonumber
\end{align*}
Here the boundary of the pair of pants $\mathcal{P}$ has three connected components, one of them being $\beta_1=\beta_2$.
We denote by $\gamma_1, \gamma_2$ the two others.

Let $Y_0\in \mathcal{T}_0(S), Y_0\neq X_0$. Denote by $Y_L=\Psi_L^{-1}(Y_0)$. Applying \eqref{eq:1},
we get
$$
\frac{\ell_\gamma(Y_L)}{\ell_\gamma(X_L)} = \big(1+ o(1)\big) \frac{\ell_\gamma(Y_0)}{\ell_\gamma(X_0)}, \ \forall \ \gamma\in \mathcal{S}.
$$
Using the fact that
$$\frac{A_1+A_2}{B_1+B_2}\leq \max\{\frac{A_1}{B_1}, \frac{A_2}{B_2}\}, \ A_1,A_2,B_1,B_2 > 0,$$
we derive from \eqref{eq:2} that  for any $\alpha\in \mathcal{A}$, there is some $\gamma\in \mathcal{S}$ such that

\begin{eqnarray*}
\frac{\ell_\alpha(Y_L)}{\ell_\alpha(X_L)}  &\leq& \max\{ \frac{\ell_\gamma(Y_L)}{\ell_\gamma(X_L)}, 1+o(1)\}\\
&\leq& \big(1+ o(1)\big) \max\{\frac{\ell_\gamma(Y_0)}{\ell_\gamma(X_0)}, 1\}.
\end{eqnarray*}
In conclusion, we have
$$\vert d(X_L,Y_L) - d_{\mathrm{Th}}(X_0,Y_0)\vert = o(1).$$
This proves the lemma.
\end{proof}

\remark{Instead of studying only the convergence of hyperbolic structures $X_L$ to a hyperbolic structure $X_0$ where all boundary components become cusps (length zero), one may study convergence to hyperbolic structures where \emph{some} of the boundary components are cusps. The same arguments we used, but with heavier notation, show that we may view the spaces $\mathcal{T}_L$ associated with  fixed vectors $L$ with some zero coordinates as sitting on the boundary of the Teichm\"uller space of the surface $S=S_{g,p,n}$ with $p$ punctures and $n$ boundary components. This boudary has an obvious stratified structure. The convergence to the boundary is in the sense of the arc metrics (each stratum is endowed with its own arc metric). This arc metric, as we prove in the present paper, becomes the Thurston metric as one approaches surfaces $S_0$ when all the boundary components become cusps.
}
\section{Translation distances of mapping classes}\label{sec:trans}

We now consider the Teichm\"uller space $\Tg$ as a space of equivalence classes of marked hyperbolic structures $(X,g)$,
where $X$ is a hyperbolic surface and $g: S\to X$ is an orientation-preserving homeomorphism (rather than a space of equivalence classes of metrics on a fixed surface). Two marked hyperbolic
structures $(X_1,g_1)$ and $(X_2,g_2)$ are equivalent if and only if there is a conformal mapping $h:X_1\to X_2$
in the homotopy class of $g_2\circ g_1^{-1}$. With this definition, a mapping class $f$ acts on $\Tg$ by changing the
markings:

$$f: (X,g) \mapsto (X,g\circ f).$$

In this section, the surface $S$ is without boundary. Its
  Teichm\"uller space is endowed  with the Thurston metric $d_{\mathrm{Th}}$.
We recall that the \emph{translation distance} of an element $f\in \mathrm{Mod}(S)$ is defined by
$$\sigma(f)=\inf_{X\in \Tg} d_{\mathrm{Th}}\big(X,f(X)\big).$$

\subsection{Reducible maps}\label{subsec:reducible} We first show that the question of understanding $\sigma(f)$
for any $f\in \mathrm{Mod}(S)$ can be reduced to that of a pseudo-Anosov mapping class $f$.

We shall pass to surfaces with boundary, those obtained by cutting $S$ along a complete reducing system for the mapping class $f$.
We shall equip the Teichm\"uller spaces of these surfaces with boundary with the arc metric and use the result of the preceding section.

Assume that $f\in \mathrm{Mod}(S)$ is reducible.
Let $\mathcal{C}=\{c_1,\cdots, c_r\}\subset \mathcal{S}$ be the maximal subset of disjoint
simple closed curves such that $f(\mathcal{C})=\mathcal{C}$. We denote the connected components of $S\setminus {\mathcal{C}}$
by $R^1, \cdots, R^s$. Each $R^j, 1\leq j\leq s$ is a surface of negative Euler characteristic with finitely many boundary
components. We choose an arbitrary hyperbolic structure on $S$ such that the length of the system
$\mathcal{C}=\{c_1,\cdots, c_r\}$ is $L=(L_1, \cdots, L_r)$ (we assume that $L_1=\cdots=L_r$, as a matter of convenience).
Note that the existence of a hyperbolic structure $X_L$ is obvious, but such a structure is not unique, since one can perform twists around the curves in $\mathcal{C}$. We let $\{X_L\}$ be the family of hyperbolic structures on
$S$ corresponding to a given $L$.

In the following, convergence $X_L\to X_0$ is in the sense of Definition \ref{conv}.

The restriction of each $X_L$ on each subsurface $R^j, 1\leq j\leq s$ induces a hyperbolic structure on
$R^j$, which we denote by $X_L^j$.

The following lemma reduces the study of $\sigma(f)$ to the consideration of a ``smaller" Teichm\"uller space.
\begin{lemma}\label{lem:limit}
As $L\to 0$, we have $X^j_L \to X^j_0, 1\leq j \leq s$  and
 $$d_{\mathrm{Th}}(X_L, f(X_L))\to \max_{j}\{ d_{\mathrm{Th}}(X^j_0, f(X^j_0))\}.$$
\end{lemma}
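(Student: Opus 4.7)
The plan is to prove the two assertions separately using techniques from the proof of Proposition~\ref{lem:Lip}, adapted to the setting where the pinched curves $\mathcal{C}$ lie in the interior of $S$. For the convergence $X_L^j \to X_0^j$, I fix a pants decomposition of $S$ refining $\mathcal{C}$; its Fenchel--Nielsen coordinates split into the $\mathcal{C}$-block (with all lengths equal to $L$) and coordinates along pants curves lying in the interiors of the subsurfaces $R^j$. Restricting $X_L$ to $R^j$ yields a hyperbolic structure whose Fenchel--Nielsen coordinates consist of those internal coordinates together with boundary lengths equal to $L$. As $L \to 0$ the internal coordinates remain fixed and the boundary lengths vanish, which is exactly the convergence required by Definition~\ref{conv}.

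For the distance equality, I use $\ell_\gamma(f(X_L)) = \ell_{f(\gamma)}(X_L)$ and analyze the log-ratio over three types of $\gamma \in \mathcal{S}$. \emph{Type (i):} $\gamma \in \mathcal{C}$; both lengths equal $L$, so the log-ratio is $0$. \emph{Type (ii):} $\gamma$ is disjoint from $\mathcal{C}$, so $\gamma \subset R^j$ for some $j$, and $f(\gamma) \subset R^{\pi(j)}$ for the permutation $\pi$ on subsurfaces induced by $f$. By Lemma~\ref{lem:criterion}, $\ell_\gamma(X_L) \to \ell_\gamma(X_0^j)$ and $\ell_{f(\gamma)}(X_L) \to \ell_{f(\gamma)}(X_0^{\pi(j)}) = \ell_\gamma(f(X_0^j))$, so the log-ratio converges to a quantity bounded by $d_{\mathrm{Th}}(X_0^j, f(X_0^j))$. \emph{Type (iii):} $m := i(\gamma, \mathcal{C}) > 0$. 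Decomposing the $X_L$-geodesic representative of $\gamma$ at its crossings with the geodesic realizations of $\mathcal{C}$ into arcs lying in the $R^{j_k}$'s and applying the arc-length formula from the proof of Proposition~\ref{lem:Lip}, each arc contributes a divergent term $2|\log\sinh(L/2)| + O(1)$, so
\[\ell_\gamma(X_L) = 2m \, |\log\sinh(L/2)| + O(1).\]
Since $f$ preserves $\mathcal{C}$, we also have $i(f(\gamma), \mathcal{C}) = m$, so $\ell_{f(\gamma)}(X_L)$ has the same divergent principal term and the log-ratio tends to $0$.

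Taking the supremum over $\mathcal{S}$ yields $d_{\mathrm{Th}}(X_L, f(X_L)) \leq \max_j d_{\mathrm{Th}}(X_0^j, f(X_0^j)) + o(1)$. The matching lower bound follows by fixing an index $j_0$ realizing the maximum, selecting for each $\varepsilon>0$ a curve in $R^{j_0}$ whose log length-ratio on $X_0^{j_0}$ under $f$ is within $\varepsilon$ of $d_{\mathrm{Th}}(X_0^{j_0}, f(X_0^{j_0}))$, and applying the Type~(ii) estimate to that curve. The main obstacle I anticipate is the Type~(iii) analysis: although such curves' lengths diverge, one must verify that the divergent collar contributions in numerator and denominator cancel up to $O(1)$. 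This cancellation rests on the $f$-invariance of $\mathcal{C}$ (equalizing intersection numbers) and on all curves of $\mathcal{C}$ being assigned the same length $L$ (equalizing the per-crossing contribution), both of which are built into the setup of this section.
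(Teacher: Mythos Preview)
Your Type~(iii) analysis contains a genuine gap. The expansion
\[
\ell_\gamma(X_L)=2m\,\bigl|\log\sinh(L/2)\bigr|+O(1)
\]
is correct for each \emph{fixed} $\gamma$, but the $O(1)$ is not uniform in $\gamma$: it absorbs the total geodesic length that the arcs of $\gamma$ spend inside the thick parts of the subsurfaces $R^{j_k}$, and this can be made arbitrarily large by letting $\gamma$ wind around inside some $R^{j}$. For such curves the divergent collar term no longer dominates, and the log-ratio $\log\bigl(\ell_{f(\gamma)}(X_L)/\ell_\gamma(X_L)\bigr)$ need not tend to $0$; in fact it can approach $d_{\mathrm{Th}}\bigl(X_0^{j},f(X_0^{j})\bigr)$ as the winding increases. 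Since the Thurston metric is a supremum over all $\gamma\in\mathcal{S}$, a pointwise-in-$\gamma$ estimate does not suffice for the upper bound you claim.

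There is a second, related issue: the arc-length formula from Proposition~\ref{lem:Lip} computes the length of the geodesic arc \emph{orthogonal} to the boundary of $X_L^{j}$, whereas the segments you obtain by cutting the $X_L$-geodesic representative of $\gamma$ along $\widehat{\mathcal C}$ are generally not orthogonal. The discrepancy between the two is again unbounded over $\mathcal{S}$ (think of a curve that spirals in the collar before crossing) and must be controlled. The paper addresses both problems simultaneously via Lemma~\ref{lem:key}, which gives a \emph{uniform} estimate $L_j'/L_j\le (1+o(1))\max\{1,\ell_j'/\ell_j\}$ comparing the actual segment lengths to the orthogonal arc lengths; its proof rests on the existence of a uniformly quasiconformal map $X_L\to f(X_L)$ (Lemma~\ref{lem:1}). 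With this in hand the mediant inequality bounds the Type~(iii) ratio by $(1+o(1))\max_j d\bigl(X_L^{j},f(X_L^{j})\bigr)$, and Proposition~\ref{lem:Lip} converts this arc-metric quantity to the desired Thurston distance. Your lower bound and Type~(i)/(ii) arguments are fine and agree with the paper's.
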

\begin{proof}
It is clear that $X^j_L \to X^j_0, 1\leq j \leq s$. Moreover,
the proof of Lemma \ref{lem:Lip} implies that

 $$\liminf_{L\to 0} d_{\mathrm{Th}}(X_L, f(X_L))\geq  \max_{j}\{ d_{\mathrm{Th}}(X^j_0, f(X^j_0))\}.$$

It remains to show that

 $$\limsup_{L\to 0} d_{\mathrm{Th}}(X_L, f(X_L))\leq  \max_{j}\{ d_{\mathrm{Th}}(X^j_0, f(X^j_0))\}.$$
 The proof given below is similar to our argument in the proof of Lemma \ref{lem:Lip}.

Let $\gamma\in \mathcal{S}$.  If $\gamma$ is disjoint from $\mathcal{C}$,
then $\gamma$ is contained in some subsurface $R^j$.
By Lemma \ref{lem:criterion}, $\ell_\gamma(X_L)= \left(1+o(1) \right) \ell_\gamma(X^j_0)$. This implies that

\begin{equation}\label{star}
\sup_{\gamma\in \mathcal{S}, i(\gamma,\mathcal{C})=0} \frac{\ell_\gamma(f(X_L))}{\ell_\gamma(X_L)} \leq \max_{j}\{ d_{\mathrm{Th}}(X^j_0, f(X^j_0))\}.
\end{equation}

Now we assume that $\gamma\in \mathcal{S}$ and $i(\gamma,\mathcal{C})\neq 0$.
  On the hyperbolic surface $X_L$, $\mathcal{C}$ is isotopic to a geodesic submanifold
  (the geodesic representative of $\{c_1,\cdots, c_r\}$), denoted  by
  $\widehat{\mathcal{C}}$. Up to isotopy, we can assume that $f(\mathcal{\widehat{C}})=\mathcal{\widehat{C}}$.
  The geodesic representative of $\gamma$ on $X_L$
is decomposed by $\widehat{\mathcal{C}}$ into finite many geodesic segments, denoted by $\gamma_1,\cdots, \gamma_k$.
The mapping class $f$ acts on $X_L$ by pulling back (which, restricted to $\mathcal{\widehat{C}}$, is an isometry).
If we look at the geodesic representative of $\gamma$ on $f(X_L)$, it is also decomposed by  $\mathcal{\widehat{C}}$
into geodesic segments; each of them will be denoted by $\gamma_j'$, corresponding to $\gamma_j$.
 We denote the length of each $\gamma_j$ and $\gamma_j'$ by $L_j$ and  by $L_j'$, respectively.

We assume that $\gamma_j$ is contained
in some subsurface $X^{s(j)}_L$, which is identified to a point in the Teichm\"uller space $\mathcal{T}(R_{s(j)})$ of
the subsurface $R_{s(j)}$. Since the mapping class $f$ acts on $\mathcal{T}(R_{s(j)})$, we denote the image of $X^{s(j)}_L$
 by $f(X^{s(j)}_L)$. We denote by $\ell_j=\ell(\gamma_j,X^{s(j)}_L)$, the length of the geodesic representative of the arc
$\gamma_j$ on $X^{s(j)}_L$, and $\ell_j'=\ell(\gamma_j,f(X^{s(j)}_L))$.
We claim that:

\begin{lemma}\label{lem:key} As  $L$ is sufficiently small,
\begin{equation}\label{eq:key}
\frac{L_j'}{L_j}\leq \left(1+o(1)\right)\max\left\{1, \frac{\ell_j'}{\ell_j}\right\}.
\end{equation}

\end{lemma}
The proof of Lemma \ref{lem:key} is postponed to \S \ref{sec:appendix}.
Assuming this lemma, we continue the proof of Lemma \ref{lem:limit}. Using \eqref{eq:key}, we have

\begin{eqnarray}
\frac{\ell_\gamma(f(X_L))}{\ell_\gamma(X_L)} &=&  \frac{\sum L_j'}{\sum L_j} \nonumber \\
&\leq&  (1+o(L)) \max\left\{ 1, \frac{ \ell(\gamma_j,f(X^{s(j)}_L))}{ \ell(\gamma_j,X^{s(j)}_L)} \right\}  \label{eq:oncetwist}
\end{eqnarray}

As a result, we have
\[\sup_{\gamma\in\mathcal{S},i(\gamma,\mathcal{C})\not=0}
\frac{\ell_\gamma(f(X_L))}
{\ell_\gamma(X_L)}\leq
(1+o(1))\max_{1\leq j\leq s}\{d(X_L^j,f(X_L^j))\}
.\]
By Proposition \ref{lem:Lip}, the right-hand side converges to
\[\max\{d_{\mathrm{Th}}(X_0^j,f(X_0^j)\}
.\]
The lemma follows from this result and \eqref{star}.
\end{proof}

There is a converse construction. We can first pick a point $(X_1^0, \cdots,X^s_0)$ in the product of Teichm\"uller spaces
$\mathcal{T}_0(S\setminus \mathcal{C}) := \prod_j \mathcal{T}_0(R^j)$, and construct a family of hyperbolic structures $X_L$ in $\Tg$
such that $X^j_L\to X^j_0$. As a result, we  obtain:
\begin{corollary}\label{coro:reducible}
If $f\in \mathrm{Mod}(S)$ is reduced by $\mathcal{C}=\{c_1,\cdots, c_r\}\subset \mathcal{S}$, then
$\sigma(f)$ is at most the translation distance of the action by $f$ restricted on $\mathcal{T}_0(S\setminus \mathcal{C})$.
\end{corollary}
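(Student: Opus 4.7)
The plan is to invert the construction in Lemma \ref{lem:limit}: rather than starting with $X_L\in\Tg$ and reading off the subsurface pieces, I would start from an arbitrary point $\vec X_0=(X_0^1,\ldots,X_0^s)$ in the product $\mathcal{T}_0(S\setminus\mathcal{C})=\prod_j\mathcal{T}_0(R^j)$ and produce a family $X_L\in\Tg$ whose subsurface pieces converge to the $X_0^j$ in the sense of Definition \ref{conv}. Once such a family is in hand, Lemma \ref{lem:limit} immediately gives
\[\lim_{L\to 0}d_{\mathrm{Th}}\bigl(X_L,f(X_L)\bigr)=\max_{1\leq j\leq s}d_{\mathrm{Th}}\bigl(X_0^j,f(X_0^j)\bigr).\]
Combined with $\sigma(f)\leq d_{\mathrm{Th}}(X_L,f(X_L))$ for every $L$, this yields $\sigma(f)\leq\max_j d_{\mathrm{Th}}(X_0^j,f(X_0^j))$ for every choice of $\vec X_0$.

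To build $X_L$, first I would extend $\mathcal{C}$ to a pants decomposition of $S$ by adjoining, for each subsurface $R^j$, a pants decomposition of $R^j$ for which the Fenchel--Nielsen coordinates of $X_0^j$ are given. I would then define $X_L\in\Tg$ by copying the length--twist parameters of $X_0^j$ onto the adjoined curves inside $R^j$, and setting the length of each $c_i\in\mathcal{C}$ to $L$ with (say) zero twist. By construction $X_L\in\mathcal{T}_L(S)$ and its restriction $X_L^j$ to each $R^j$ shares its non-boundary Fenchel--Nielsen parameters with $X_0^j$, so $X_L^j\to X_0^j$ in the sense of Definition \ref{conv}. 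Taking the infimum over $\vec X_0$ on both sides of $\sigma(f)\leq\max_j d_{\mathrm{Th}}(X_0^j,f(X_0^j))$ then produces
\[\sigma(f)\leq \inf_{\vec X_0}\max_{j}d_{\mathrm{Th}}\bigl(X_0^j,f(X_0^j)\bigr),\]
and the right-hand side is precisely the translation distance of $f$ acting on $\mathcal{T}_0(S\setminus\mathcal{C})$ with its natural product (maximum) Thurston metric.

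The main obstacle I expect is purely a matter of equivariant book-keeping: $f$ permutes the pieces $R^j$ according to some permutation $\pi$, so the ``action of $f$ on $\mathcal{T}_0(S\setminus\mathcal{C})$'' is the combination of $\pi$ with the individual induced maps $\mathcal{T}_0(R^j)\to\mathcal{T}_0(R^{\pi(j)})$. Since $f(\mathcal{C})=\mathcal{C}$, the constructed family $X_L$ automatically satisfies $(f(X_L))^{\pi(j)}\to f(X_0^j)$, so the $\max_j$ appearing in Lemma \ref{lem:limit} coincides with the quantity one would compute directly on the product. No further analysis is required, since the quantitative content is entirely carried by Lemma \ref{lem:limit}.
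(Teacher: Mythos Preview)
Your proposal is correct and follows essentially the same approach as the paper. The paper states the corollary immediately after noting that ``there is a converse construction'' in which one first picks a point $(X_0^1,\ldots,X_0^s)\in\prod_j\mathcal{T}_0(R^j)$ and then builds a family $X_L\in\Tg$ with $X_L^j\to X_0^j$; your Fenchel--Nielsen description of this family and your remark about the permutation of the pieces simply make explicit what the paper leaves implicit.
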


Let $R^0$ be a connected component of $S\setminus \mathcal{C}$. There is a least integer $k$ such that $f^k(R^0)=R^0$.
We say that the union $\mathcal{R}=R_0 \cup f(R_0) \cup \cdots \cup f^{k-1}(R_0)$ is a \emph{reducible component} of $f$.
We set $$\mathcal{T}(\mathcal{R})= \prod_{j=0}^{k-1}\mathcal{T}_0(f^j(R_0))$$
and we
endow this space with the supremum Thurston metric, which we still denote by $d_{\mathrm{Th}}$.
We set
$$\sigma_{\mathcal{R}}(f)=\inf_{X\in \mathcal{T}(\mathcal{R})}d_{\mathrm{Th}}(X,f(X)).$$
By Corollary \ref{coro:reducible}, we have

$$\sigma(f)\leq \max_{\mathcal{R}} \sigma_{\mathcal{R}}(f).$$
Note that the action of $f$ on $\mathcal{R}$ is either periodic or pseudo-Anosov.
When $\mathcal{R}$ is a periodic  component of $f$, $\sigma_{\mathcal{R}}(f)=0$.

\subsection{Pseudo-Anosov maps}\label{sec:pA}
We make a remark on the translation distance of a pseudo-Anosov map $f\in \mathrm{Mod}(S)$, for the action of $f$ on Teichm\"uller space equipped with the Thurston metric,
which was investigated in \cite{LPST2}.
By definition, there is a pair of transverse measured laminations $(\mu^s, \mu^u)$ binding the surface
$S$ (which are called the \emph{stable
and unstable measured laminations} associated to $f$) satisfying:

$$f(\mu^s)=K \mu^s, f(\mu^u)=\frac{1}{K} \mu^u,$$
where $K=\lambda(f)$. By a result of Thurston (see \cite{FLP}), both $\mu^s$ and $\mu^u$ are minimal and uniquely ergodic.
Taking a sequence of simple closed curves to approximate $\mu^u$, it follows directly from the definition of the
Thurston metric that
$$\sigma(f)\geq \log \lambda(f).$$

Let us now endow $S$ with some hyperbolic structure. The complement of $S\setminus \mu^u$ consists of a finite number of ideal
polygons. We adjoin a finite number of leaves to $\mu^u$ such that it becomes a complete geodesic lamination, which we denote by
$\overline{\mu^u}$. (Note that $\overline{\mu^u}$ is not necessarily  unique, but there are finitely many choices).

Using Thurston's shearing coordinates \cite{Thurston}, there exists a unique hyperbolic structure $X\in \Tg$ corresponding
the the pair $(\overline{\mu^u},\mu^s)$ in the sense that $\mu^s$ is, up to equivalence, the horocyclic foliation of $\overline{\mu^u}$.
Then, $\{(\overline{\mu^u},e^t\mu^s)\}_{t\in \mathbb{R}}$ defines a stretch line on $\Tg$, which is a geodesic ray of the Thurston
metric passing through the point $X\cong (\overline{\mu^u},\mu^s)$.

Since the map $f$ acts isometrically between the hyperbolic surfaces $X$ and $f(X)$,
the image of the stretch line $\{(\overline{\mu^u},e^t\mu^s)\}_{t\in \mathbb{R}}$
is the stretch line $\{(f(\overline{\mu^u}), e^tf(\mu^s))\}_{t\in \mathbb{R}}$. Note that $f(\mu^s))=K\mu^s$.
There exists an integer $n$ such that $f^n(\overline{\mu^u})=\overline{\mu^u}$. It turns out that
the stretch line $\{(\overline{\mu^u},e^t\mu^s)\}_{t\in \mathbb{R}}$ is preserved by $f^n$ and the point
$X\cong (\overline{\mu^u},\mu^s)$ is transformed into $f(X)\cong (\lambda,K^n \mu^s)$.
It follows from an argument of Bers \cite{Bers} that the translation distance
$$\sigma(f^n)= \log \lambda(f^n)=n\log \lambda(f).$$

If $f$ is reducible, by Corollary \ref{coro:reducible}, the translation distance of $f$ is at most
$\max_{\mathcal{R}} \sigma_{\mathcal{R}}(f)$, where $\mathcal{R}$ is taken over all reducible component
of $f$. If $\mathcal{R}=R_0 \cup f(R_0) \cup \cdots \cup f^{k-1}(R_0)$ is a pseudo-Anosov component, then we
can take a hyperbolic structure $X_0\in \mathcal{T}(R_0)$ such that $X_0$ lies on a stretch line
preserved by some power $f^{n}$. We equip each $f^j(R_0)$ with the hyperbolic structure $f^j(X_0), 1\leq j\leq k-1$.
This shows that

$$\sigma_{\mathcal{R}}(f^n)=  n \log \lambda(f|_{\mathcal{R}}),$$
where $f|_{\mathcal{R}}$ denotes the restriction of $f$ on $\mathcal{R}$.

\begin{proof}[Proof of Theorem \ref{thm:dilatation}]
It follows directly from the definition of the
Thurston metric that
$$\sigma(f^n)\geq  n \log \lambda(f).$$
Since $f$ has finite many reducible components,
the theorem follows from the above discussion, by taking $n$ sufficiently large.

\end{proof}

\remark{When $f$ has no pseudo-Anosov component, we have $\sigma(f)=0$. The dilatation $\sigma(f)$, when $f$
is a general pseudo-Anosov map, remains an open question. Note that by Wolpert's inequality,
$d_{\mathrm{Th}}\leq 2 d_{\mathrm{Teich}}$. As a result, we have
$$\log\lambda(f)\leq \sigma(f)\leq 2\log\lambda(f).$$}

\subsection{Proof of Lemma \ref{lem:key}}\label{sec:appendix}
In this subsection, we prove Lemma \ref{lem:key}, which is used  in \S \ref{subsec:reducible}.
We also assume that $X_L$ is a family of hyperbolic structures on $S$ obtained by pinching
the curves system $\mathcal{C}$. As before, we denote the  geodesic representative of $\mathcal{C}$ on $X_L$ by
$\widehat{\mathcal{C}}$. By cutting $X_L$ along $\widehat{\mathcal{C}}$,
we obtain finitely many subsurfaces with geodesic boundary, denoted by $\{X^j_L\}$.
The mapping class $f$ acts on $X_L$, preserving the set $\widehat{\mathcal{C}}$
(with some power of Dehn twists around each component of $\widehat{\mathcal{C}}$).

\begin{lemma}\label{lem:1}
There is a uniform constant $K$ such that $f$ is homotopic to a K-quasiconformal mapping from $X_L$ to $f(X_L)$.
\end{lemma}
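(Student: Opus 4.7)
The plan is to build a quasiconformal representative of $f$ by assembling pieces matched to the decomposition $S\setminus\mathcal{C}=\bigsqcup_j R^j$, and to show each piece carries a dilatation bounded independently of $L$. The mapping class $f$ induces a permutation $\sigma$ of the components $R^j$ together with mapping classes $f_j\colon R^j\to R^{\sigma(j)}$ of bordered surfaces, and, along each $\hat c_i\subset\widehat{\mathcal{C}}$, a fixed integer Dehn twist power $m_i$ depending only on $f$.

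First, on the limit cusped surfaces each $f_j$ admits a quasiconformal representative $\phi_j\colon X_0^j\to f(X_0)^{\sigma(j)}$ of some dilatation $K_j$ depending only on the fixed mapping class $f$. By Lemma \ref{lem:criterion}, the hyperbolic structures $X_L^j$ and $f(X_L)^{\sigma(j)}$ are uniformly close to $X_0^j$ and $f(X_0)^{\sigma(j)}$ outside the standard collars of $\widehat{\mathcal{C}}$; pulling $\phi_j$ back through these near-isometries yields a quasiconformal map $\phi_{j,L}$ with dilatation $K_j+o(1)$, defined away from small neighbourhoods of $\widehat{\mathcal{C}}$.

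Second, on the standard collar of a short geodesic of length $L$, whose conformal modulus $M$ is of order $\pi/L$, the Dehn twist power $T^{m_i}$ is realized by the affine shear $(s,\theta)\mapsto(s,\theta+2\pi m_i s/M)$ in the logarithmic coordinates on the collar, whose complex dilatation is of order $m_i/M$ and hence tends to zero. In particular the Fenchel--Nielsen twist discrepancy $\tau_i(f(X_L))-\tau_i(X_L)=m_iL$ is absorbed into a shearing map of dilatation tending to one.

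Third, I would split each standard collar into an inner subcollar, on which the shear above acts, and an outer buffer annulus whose modulus still tends to infinity; on the buffer I interpolate $\phi_{j,L}$ and the shear by a convex combination in collar (Fermi) coordinates. Because $\phi_{j,L}$ is $C^1$-close to a hyperbolic isometry and the shear is $C^1$-close to the identity on the buffer, the interpolation has dilatation $1+o(1)$. Assembling the pieces gives a quasiconformal homeomorphism $X_L\to f(X_L)$ in the homotopy class of $f$ with dilatation bounded by $\max_j K_j+o(1)$, hence by some uniform $K$ for all $L\ll 1$. The delicate point is this last gluing: it requires the subsurface map and the shearing model to agree, modulo a map of vanishing dilatation, on a buffer annulus of large modulus, which is the step at which a quantitative form of the Mondello convergence must be invoked; once that is in hand, the rest of the construction is local and explicit.
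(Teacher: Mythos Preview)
Your strategy differs from the paper's. The paper works entirely on the bordered surfaces $X_L^j$, never passing to the cusped limit: it takes a geodesic pants decomposition of each $X_L^j$ and invokes Bishop's theorem \cite{Bishop} to deform each pair of pants so that its boundary lengths match those of $f(X_L^j)$, with dilatation bounded in terms of the length ratios $|\log(\ell_{\alpha_i}(f(X_L^j))/\ell_{\alpha_i}(X_L^j))|$; it then glues the deformed pants with the required twists, using the dilatation estimates of \cite{ALPSS}. Since the Fenchel--Nielsen parameters of the pieces are fixed (or nearly so) as $L\to 0$, all of these ratios and twists are uniformly bounded, and the Dehn twist powers along $\widehat{\mathcal C}$ are fixed integers, so the total dilatation is uniformly bounded. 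No interpolation across a buffer annulus is needed.

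Your route---build $\phi_j$ on the cusped limit $X_0^j$, transport it back via Mondello's convergence, and glue to an affine shear on the collars---is plausible, but the gluing step you yourself flag is genuinely incomplete as written. The assertion that ``$\phi_{j,L}$ is $C^1$-close to a hyperbolic isometry'' on the buffer does not follow from what you have set up: $\phi_j$ is only a quasiconformal map of the cusped surface, and a priori it may be far from an isometry in any prescribed horoball neighbourhood. To make the interpolation work you must first \emph{normalize} $\phi_j$ so that it agrees with the standard cusp isometry on a fixed horoball at each puncture (this can be arranged at the cost of enlarging $K_j$ by a bounded amount, since the cusp ends are conformally standard half-cylinders). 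With that normalization in hand, the transported map is indeed close to the identity in collar coordinates on the buffer, and your convex interpolation with the shear goes through. The paper's Bishop-based construction simply sidesteps this issue.
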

\begin{proof}
We use an argument of Bishop \cite{Bishop} to construct an explicit map between $X_L$ and $f(X_L)$ with the
required properties.

We choose a geodesic pants decomposition for each subsurface $X^j_L$.
For each pair of pants with boundary curves $\{\alpha_1,\alpha_2,\alpha_3\}$ (the
pair of pants is degenerate, some of the curves may be punctures), we can deform it into a new one such
that the boundary lengths $\left(\ell_{\alpha_1}(X^j_L),\ell_{\alpha_2}(X^j_L),\ell_{\alpha_3}(X^j_L)\right)$ are replaced
by $\left(\ell_{\alpha_1}(f(X^j_L)),\ell_{\alpha_2}(f(X^j_L)),\ell_{\alpha_3}(f(X^j_L))\right)$. Bishop \cite{Bishop}
proved that such a deformation can be constructed in such a way that the quasiconformal dilatation only depends on the upper bound of
$$\max\{ |\log \frac{\ell_{\alpha_1}(f(X^j_L))}{\ell_{\alpha_1}(X^j_L)}|,| \log\frac{\ell_{\alpha_2}(f(X^j_L))}{\ell_{\alpha_2}(X^j_L)}|,
|\log\frac{\ell_{\alpha_3}(f(X^j_L))}{\ell_{\alpha_3}(X^j_L)}| \}.$$
Furthermore, we can glue all the new pairs of pants together (in the same topological pattern as before) with appropriate twists such that
the resulting structure is $f(X^j_L)$. The gluing map is again a quasiconformal mapping, with dilatation controlled by
(upper bound of) the lengths and twists of the curves in the pants decomposition (see \cite{ALPSS}).

We construct the above deformation for each $X^j_L$,
and we obtain $f(X_L)$ by gluing the resulting surfaces $f(X^1_L), \cdots, f(X^s_L)$,
 in addition with some fixed power of Dehn twists
around each component of $\widehat{\mathcal{C}}$.

Note that as $L\to 0$, the hyperbolic structures $X_L$ are chosen such that on each component of $X_L\setminus \widehat{\mathcal{C}}$,
the lengths and twists of some pants decomposition  are fixed (or almost fixed). Therefore, the quasiconformal dilation of the above
deformations have a uniform upper bound independent on the choice of $L$.
\end{proof}

Consider any simple closed geodesic $\gamma$ on $X_L$ which intersects with $\widehat{\mathcal{C}}$.
Let $\gamma_j$ be a segment of $\gamma$ contained in a component, say $X_L^j$, of $X_L\setminus \widehat{\mathcal{C}}$.
We assume that the two endpoints of $\gamma_j$ lie on $\beta_1$ and $\beta_2$, respectively.
Denote the length of $\gamma_j$ by $L_j$.

It is convenient to work on the universal cover $\mathbb{H}^2$. As shown in Figure \ref{fig:int},
the geodesic $\beta_1$ is lifted to the image axes $\tilde{\beta_1}$, with endpoints $0$ and $\infty$. We can choose a universal
cover such that the point $1$ is an endpoint of $\tilde{\beta_2}$, a lift of $\beta_2$, and the geodesic segment $\gamma_j$
is realized as a geodesic segment connecting $\tilde{\beta_1}$ and $\tilde{\beta_2}$. The geodesic representative
of $\gamma_j$ on the subsurface $X^L_j$ corresponding to the geodesic segment $\hat{\gamma_j}$,
which intersects $\tilde{\beta_1}$ and $\tilde{\beta_2}$ perpendicularly.
A lift of $\gamma$, denoted by $\tilde{\gamma}$, is also drawn in the figure, with endpoints $x_1$ and $x_2$

One can see from Figure \ref{fig:int} that the geodesic segments $\gamma_j, \hat{\gamma_j}$ together with $\tilde{\beta_1}$ and $\tilde{\beta_2}$
bound a geodesic quadrilateral.

\begin{figure}[!hbp]
\centering
\begin{tikzpicture}
   \draw[->] (-6, 0) -- (6,0) node[below]{$x$};
   \draw[->] (0, 0) -- (0,5.5) node[above]{$y$};
   \draw[thick] (2.5,0) arc (0:180:0.5);
   \draw[thick,red] (2.3,0) arc (0:180:3.7);
   \draw[thick] (-4.5,0) arc (0:180:0.5);
   \draw[thick,blue] (1.9,0.5) arc (15:89:2);
   \draw (2.0,0.5) |- (1.85,0.6);
   \node[below] at (1.5,0) {$1$};
   \node[below] at (2.3,0) {$x_2$};
   \node[below] at (0,0) {$0$};
   \node[below] at (-5.2,0) {$x_1$};
   \node[left] at (0,3.3) {$y_2$};
   \node[left] at (0,2) {$y_1$};
   \draw (0,2.1) -| (0.1,1.99);
   \node[right] at (1.5,2.5) {$\gamma_j$};
   \node[left] at (0.75,1.6) {$\hat{\gamma}_j$};
   \node[above] at (-2,3.7) {$\tilde{\gamma}$};
   \draw[very thin] (0,0) -- (5.5,1.5);
   \draw[very thin] (0,0) -- (-5.5,1.5);
\end{tikzpicture}
\caption{\small{}}
\label{fig:int}
\end{figure}

The length of the sides on the left and right of the quadrilateral will be denoted by $a$ and $b$, respectively. Note that
$a=|\log{y_2} -\log{y_1}|$. We apply the hyperbolic quadrilateral formula:

$$\cosh(L_j)=-\sinh(a) \sinh(b)+\cosh(a) \cosh(b) \cosh(\ell_j).$$

As $L\to 0$, both of $L_j$ and $\ell_j$ are sufficiently large. Thus we have the following approximation:

$$L_j= a+b+\ell_j+O(1).$$

We have to compare the above data with those of $f(X_L)$.
We denote the corresponding quantities of $f(X_L)$  by
$y_1',y_2', \cdots$.
By Lemma \ref{lem:1}, there is a  $K$-quasiconformal mapping between $X_L$ and $f(X_L)$.
Such a mapping can be lifted to a quasiconformal mapping $\tilde{f}$ on $\mathbb{H}^2$,
such that it preserves the three points $0,1,\infty$. There is a uniform constant $M$ such that
for any $p,q,r,s$ on $\partial \mathbb{H}^2$,
$$\frac{1}{M} |(p,q,r,s)| \leq |(f(p),f(q),f(r),f(s))|\leq M |(p,q,r,s)|.$$
This implies that
$$\max\{1, \frac{x_1}{M}\}\leq f(x_1) \leq M x_1, \max\{\frac{|x_2|}{M},0\}\leq |f(x_2)| \leq M |x_2|.$$

Figure \ref{fig:proj} shows that $|y_2-y_2'|=O(1)$. In fact, since the collar neighborhood of $\beta_1$ is sufficiently large,
the other endpoint of $\tilde{\beta_2}$ is close to $1$ (this is also true for
$f(\tilde{\beta_2})$). Thus their projections on $\tilde{\beta_1}$ is near $i$, up to an uniformly bounded distance.

On the other hand, we show that $|y_1-y_1'|=O(1)$. This can be seen by a direct calculation.
Note that the two endpoints of $\tilde{\gamma}$ are $x_1$ and $x_2$. It is easy to see that
$y_1=\sqrt{|x_1x_2|}$. Under deformation by $\tilde{f}$, we have $y_1'=\sqrt{|x_1'x_2'|}$.
\begin{figure}[!hbp]
\centering
\begin{tikzpicture}
   \draw[->] (-6, 0) -- (6,0) node[below]{$x$};
   \draw[->] (0, 0) -- (0,5.5) node[above]{$y$};
   \draw[thick] (2.5,0) arc (0:180:0.5);
   \draw[thick,blue] (1.5,0) arc (0:90:1.5);
   \draw[thick,blue] (2.5,0) arc (0:90:2.5);
   \draw[very thin] (0,0) -- (5.5,1.5);
   \draw[very thin] (0,0) -- (-5.5,1.5);
   \node[below] at (1.5,0) {$1$};
   \draw[thick,red] (0,1.5) -- (0,2.5);
   \draw[->] (-0.7,2)-- (-0.2,2.2);
   \node[left] at (-0.7,2) {$O(1)$};
\end{tikzpicture}
\caption{\small{}}
\label{fig:proj}
\end{figure} As we noticed before,
$$\frac{1}{M}|x_1|\leq |x_1'|\leq M |x_1|, \frac{1}{M}|x_2|\leq |x_2'|\leq M |x_2|.$$
This means that $|\log \frac{y_1'}{y_1}|\leq M$.
As a corollary, under the deformation $\tilde{f}$, $a$ is changed up to a bounded amount, that is, $|a'-a|=O(1)$.
The same argument applies to $|b'-b|$.

Applying the quadrilateral formula again:
$$\cosh(L'_j)=-\sinh(a'_1) \sinh(a'_2)+\cosh(a'_1) \cosh(a'_2) \cosh(\ell'_j).$$
The above computations show that:

$$L_j= a+b+\ell_j+O(1),$$
$$L'_j= a'+b'++\ell'_j+O(1),$$
$$|a-a'|=O(1),|b-b'|=O(1).$$
Thus

$$\frac{L_j'}{L_j}\leq (1+o(1))\max\{1, \frac{\ell_j'}{\ell_j}\}.$$
This complete the proof of Lemma \ref{lem:key}.


\begin{thebibliography}{99}

\bibitem{ALPS} D. Alessandrini, L. Liu, A. Papadopoulos, W. Su, The horofunction compactification of the arc metric on Teichm\"uller space. arXiv:1411.6208.


\bibitem{ALPSS} D. Alessandrini, L. Liu, A. Papadopoulos, W. Su and Z. Sun, On Fenchel-Nielsen coordinates on Teichm\"uller spaces of surfaces of infinite type. \emph{Ann. Acad. Sci. Fenn., Math.}, 36 (2011), 621--659.



\bibitem{Bers} L. Bers, An extremal problem for quasiconformal mappings and a theorem by Thurston. \emph{Acta Mathematica}, 141 (1978), 73--98.


\bibitem{Bishop} C. J. Bishop, Quasiconformal mappings of $Y$-pieces. \emph{Revista Matem\'atica Iberoamericana}, 18(3) (2002), 627--652.


\bibitem{FLP} A. Fathi, F. Laudenbach and V. Po\'enaru, \emph{Travaux de Thurston sur les surfaces}. Ast\'erisque 66-67 (1979). English translation: Thurston's Work on Surfaces. Princeton University Press, 2012.

\bibitem{LPST} L. Liu, A. Papadopoulos, W. Su and G. Th\'eret,  On length spectrum metrics and weak metrics on Teichm\"uller spaces of surfaces with boundary. \emph{Ann. Acad. Sci. Fenn., Math.} 35, No. 1, 255--274 (2010).

\bibitem{LPST1} L. Liu, A. Papadopoulos, W. Su and G. Th\'eret,  Length spectra and the Teichm\"uller metric for surfaces with boundary, \emph{Monatshefte f\"ur Mathematik}, vol. 161, no3, pp. 295-311 (2010).

\bibitem{LPST2} L. Liu, A. Papadopoulos, W. Su and G. Th\'eret, On the classification of mapping class actions on Thurston's asymmetric metric. \emph{Mathematical Proceedings of the Cambridge Philosophical Society} Vol. 155, No. 03 (2013), 499--515.

\bibitem{Mondello} G. Mondello, A criterion of convergence in the augmented Teichm\"uller space. \emph{Bulletin of the London Mathematical Society}, 41 (2009), 733--746.


\bibitem{2009j}  A. Papadopoulos and G. Th\'eret, Some Lipschitz maps between hyperbolic surfaces with applications to Teichm\"uller theory, \emph{Geometriae Dedicata}, 150, 1 (2011) 233- 247.


\bibitem{2009h} A. Papadopoulos and G. Th\'eret, Shortening all the simple closed geodesics on surfaces with boundary, \emph{Proc. Amer. Math. Soc.} 138 (2010), 1775-1784.


\bibitem{Thurston} W. P. Thurston, {Minimal stretch maps between hyperbolic surfaces}. 1986 preprint, arxiv:math/9801039v1.

\bibitem{Walsh} C. Walsh, The horoboundary and isometry groups of Thurston's Lipschitz metric. In \emph{Handbook of Teichm\"uller theory}, Vol.  IV, 327--353, European Mathematical Society, 2014.

\bibitem{Wolpert} S. Wolpert,
Behavior of geodesic-length functions on Teichm\"uller space. \emph{J. Differential Geom.},
Volume 79, Number 2 (2008), 277--334.

\end{thebibliography}
\end{document}